\newcommand{\sg}{\sigma}
\def\S{\mathfrak{S}}
\def\multiset#1#2{\ensuremath{\left(\kern-.3em\left(\genfrac{}{}{0pt}{}{#1}{#2}\right)\kern-.3em\right)}}
\def\syt{\operatorname{SYT}}
\def\dinv{\operatorname{dinv}}
\def\area{\operatorname{area}}
\def\wt {\operatorname{wt}}
\def\linkpoly{L}
\def\linknorm{\widetilde{\linkpoly}}
\def\fubini{\overline{\mathcal{F}}}
\def\barstat{\operatorname{bar}}
\def\fulltwist{\operatorname{FT}}
\def\braid{\operatorname{Br}}
\newtheorem{lemma}{Lemma}[section]
\newtheorem{cor}{Corollary}[section]
\newtheorem{thm}{Theorem}[section]
\newtheorem{conj}{Conjecture}[section]
\newtheorem{defn}{Definition}[section]
\title{Link homology and the nabla operator}
\author[A.\ T.\ Wilson]{Andrew Timothy Wilson}
\begin{document}

\begin{abstract}
In recent work, Elias and Hogancamp develop a recurrence for the Poincar\'e series of the triply graded Hochschild homology of certain links, one of which is the $(n,n)$ torus link. In this case, Elias and Hogancamp give a combinatorial formula for this homology that is reminiscent of the combinatorics of the modified Macdonald polynomial eigenoperator $\nabla$. We give a combinatorial formula for the homologies of all links considered by Elias and Hogancamp. Our first formula is not easily computable, so we show how to transform it into a computable version. Finally, we conjecture a direct relationship between the $(n,n)$ torus link case of our formula and the symmetric function $\nabla p_{1^n}$.
\end{abstract}

\maketitle


\section{Introduction}

We begin by establishing some notation from knot theory, following \cite{elias-hogancamp}. The remaining sections of the paper will take a more combinatorial perspective.

The \emph{braid group on $n$ strands}, denoted $\braid_n$, can be defined by the presentation
\begin{align}
\braid_n &= \left\langle \sg_1, \sg_2, \ldots, \sg_{n-1} \, | \, \sg_i \sg_{i+1} \sg_i = \sg_{i+1} \sg_i \sg_{i+1}, \, \sg_i \sg_j = \sg_j \sg_i \right\rangle
\end{align}
for all $1 \leq i \leq n-2$ and $|i-j| \geq 2$. This group can be pictured as all ways to ``braid'' together $n$ strands, where $\sg_i$ corresponds to crossing string $i+1$ over string $i$ and the group operation is concatenation. One particularly notable braid is the \emph{full twist braid} on $n$ strands, denoted $\fulltwist_n$, which can be written
\begin{align}
\fulltwist_n &=\left( (\sg_1) (\sg_2 \sg_1) \ldots (\sg_{n-1} \sg_{n-2} \ldots \sg_1) \right)^2.
\end{align}
where multiplication is left to right. We will also need an operation $\omega$ on braids which corresponds to rotation around the horizontal axis. We define $\omega$ on $\braid_n$ by $\omega(\sg_i) = \sg_i$ and $\omega(\alpha \beta) = \omega(\beta) \omega(\alpha)$. Then $\omega$ is an anti-involution on $\braid_n$. All of our braids will have the property that the string that begins in column $i$ also ends in column $i$ for all $i$; these are sometimes called \emph{perfect braids}.

Given a braid with $n$ strands, one can form a \emph{link} (i.e.\ nonintersecting collection of knots) by identifying the top of the strand that begins in position $i$ with the bottom of the strand that ends in position $i$ for $1 \leq i \leq n$. The result is called a \emph{closed braid}. Alexander proved that every link can be represented by a closed braid (although this representation is not unique) \cite{alexander}. The closure of a perfect braid is a link that consists of $n$ separate unknots linked together.

In \cite{elias-hogancamp}, Elias and Hogancamp assign a complex $C_v$ to every binary word $v$. We describe this assignment here -- see Figure \ref{fig:cv-braid} for an example. Say $v \in \{0,1\}^n$ with $|v| = m$. We begin with two braids, the full twist braid $\fulltwist_{n-m}$ and a certain recursively defined complex $K_m$ \cite{elias-hogancamp}, which sits to the right of $\fulltwist_{n-m}$. For $i = 1$ to $n$, we feed  string $i$ into the leftmost available position in $K_m$ if $v_i = 1$; otherwise, we feed string $i$ into the leftmost available position in $\fulltwist_{n-m}$. All crossings that occur are forced to be ``positive,'' i.e.\ the right strand crosses over the left strand. This induces a braid $\beta_v \in \braid_n$ that occurs before the adjacent $\fulltwist_{n-m}$ and $K_m$. The final complex $C_v$ is obtained by performing $\omega(\beta_v)$, followed by $\beta_v$, followed by the adjacent $\fulltwist_{n-m}$ and $K_m$. We note that $C_{0^n}$ is the full twist braid $\fulltwist_n$ and that the closure of this braid is the $(n,n)$ torus link. The combinatorics of other links, in particular the $(m,n)$ torus link for $m$ and $n$ coprime, has been studied by a variety of authors in recent years \cite{gors-knots, gorsky-negut}. Haglund gives an overview of this work from a combinatorial perspective in \cite{haglund-knots}.

\begin{figure}
\label{fig:cv-braid}
\begin{center}
\includegraphics[scale=1]{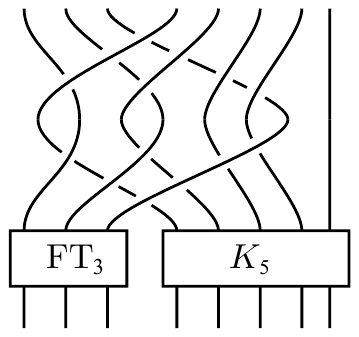}
\end{center}
\caption{We have drawn the complex $C_{10101101}$, where $\fulltwist_3$ is the full twist braid and $K_5$ is a certain complex defined recursively in \cite{elias-hogancamp}. This figure is used courtesy of \cite{elias-hogancamp}.}
\end{figure}

Elias and Hogancamp map each complex $C_v$ to a graded Soergel bimodule and then consider the \emph{Hochschild homology} of this bimodule; this is sometimes called Khovanov-Rozansky homology \cite{khovanov, khovanov-rozansky}.
This homology has three gradings: the bimodule degree (using the variable $Q$), the homological degree ($T$), and the Hochschild degree ($A$). After the grading shifts $q = Q^2$, $t = T^2 Q^{-2}$, and $a = AQ^{-2}$, Elias and Hogancamp give a recurrence for the Poincar\'e series of this triply graded homology, which they denote $f_{v}(q,a,t)$. They also give a combinatorial formula for the special case $f_{0^n}(q,a,t)$. We will give two combinatorial formulas for $f_{v}(q,a,t)$ for every $v \in \{0,1\}^n$. 

In Section \ref{sec:infinite}, we define a symmetric function $\linkpoly_v(x;q,t)$ which we call the \emph{link symmetric function}. Its definition is reminiscent of the combinatorics of the Macdonald eigenoperator $\nabla$, introduced in \cite{bght-positivity}. We prove that $f_v(q,a,t)$ is equal to a certain inner product with $\linkpoly_v(x;q,t)$. 

The main weakness of our first formula is that it is a sum over infinitely many objects, so it is not clear how to compute using this formula. We address this issue in Section \ref{sec:finite}, obtaining a finite formula for $L_v(x;q,t)$ using a collection of combinatorial objects we call \emph{barred Fubini words}.

We close by presenting some conjectures in Section \ref{sec:conjectures}. In particular, we conjecture that
\begin{align}
\linkpoly_{0^n}(x;q,t) &= (1-q)^{-n} \nabla p_{1^n} .
\end{align}
where the terminology is defined in Section \ref{sec:conjectures}.
A proof of this conjecture would provide the first combinatorial interpretation for $\nabla p_{1^n}$. There has been much recent work establishing combinatorial interpretations for $\nabla e_n$ \cite{carlsson-mellit} and $\nabla p_n$ \cite{leven-square}. We believe that $\linkpoly_v(x;q,t)$ is also related to Macdonald polynomials for general $v$, although we do not have an explicit conjecture in this direction.

\section{An infinite formula}
\label{sec:infinite}

Let $\mathbb{N} = \{0,1,2,\ldots\}$ and $\mathbb{P} = \{1,2,3,\ldots\}$. We begin by defining two statistics. 
 
\begin{defn}
Given words $\gamma \in \mathbb{N}^n$ and $\pi \in \mathbb{P}^n$, we define
\begin{align}
\area(\gamma) &= |\gamma| - \# \{1 \leq i \leq n : \gamma_i > 0 \} \\
\dinv(\gamma, \pi) &= \# \{1 \leq i < j \leq n : \gamma_i = \gamma_j, \pi_i > \pi_j \} \\
&+ \#\{1 \leq i < j \leq n : \gamma_i + 1 = \gamma_j, \pi_i < \pi_j \} \nonumber \\
x^{\pi} &= \prod_{i=1}^{n} x_{\pi_i} .
\end{align}
\end{defn}

In Figure \ref{fig:area-dinv-example}, we draw a diagram for $\gamma = 20141022$ and $\pi = 41322231$. Area counts the empty boxes in such a diagram, dinv counts certain pairs of labels, and $x^{\pi}$ records all labels that appear in the diagram. 

\begin{figure}
\begin{center}
\begin{tikzpicture}
\draw [step=0.5cm] (-0.001, 0) grid (0.5, 1);
\draw [step=0.5cm] (0.499, 0) grid (1, 0);
\draw [step=0.5cm] (0.999, 0) grid (1.5, 0.5);
\draw [step=0.5cm] (1.499, 0) grid (2, 2);
\draw [step=0.5cm] (1.999, 0) grid (2.5, 0.5);
\draw [step=0.5cm] (2.499, 0) grid (3, 0);
\draw [step=0.5cm] (2.999, 0) grid (3.5, 1);
\draw [step=0.5cm] (3.499, 0) grid (4, 1);

\node at (0.25, 0.75) {4};
\node at (0.75, -0.25) {1};
\node at (1.25, 0.25) {3};
\node at (1.75, 1.75) {2};
\node at (2.25, 0.25) {2};
\node at (2.75, -0.25) {2};
\node at (3.25, 0.75) {3};
\node at (3.75, 0.75) {1};

\end{tikzpicture}
\end{center}
\caption{We have depicted the example $\gamma = 20141022$ and $\pi = 41322231$ by drawing bottom-justified columns with heights $\gamma_1$, $\gamma_2$, \ldots, $\gamma_8$ and the labels $\pi_i$ are placed as high as possible in each column. In this example, we compute $\area(\gamma) = 6$, $\dinv(\gamma, \pi) = 7$, where the contributing pairs are in columns $(1, 7)$, $(1, 8)$, $(2,3)$, $(2,5)$, $(3, 5)$, $(5, 7)$, $(7, 8)$, and $x^{\pi} = x_1^2 x_2^3 x_3^2 x_4$.}
\label{fig:area-dinv-example}
\end{figure}

\begin{defn}
Given $n \in \mathbb{P}$ and $v \in \{0,1\}^n$, define 
\begin{align}
\linkpoly_{v} &= \linkpoly_{v}(x; q, t) = \sum_{\substack{\gamma \in \mathbb{N}^n, \, \pi \in \mathbb{P}^n \\ \gamma_i = 0 \Leftrightarrow v_i = 1}}  q^{\area(\gamma)} t^{\dinv(\gamma, \pi)}x^{\pi} .
\end{align}
\end{defn}

Perhaps the first thing to note about $\linkpoly_{v}$ is that it can be expressed as a sum of LLT polynomials \cite{llt}; as a result, it is symmetric in the $x_i$ variables. More precisely, each $\gamma \in \mathbb{N}^n$ can be associated with an $n$-tuple $\mathbf{\lambda}(\gamma)$ of single cell partitions in the plane, where the $i$th cell is placed on diagonal $\gamma_i$ and the order is not changed. Using the notation of \cite{hhl}, the unicellular LLT polynomial $G_{\mathbf{\lambda}(\gamma)}(x; t)$ can be used to write
\begin{align}
\linkpoly_{v} &= \sum_{\substack{\gamma \in \mathbb{N}^n \\ \gamma_i = 0 \Leftrightarrow v_i = 1}} q^{\area(\gamma)} G_{\mathbf{\lambda}(\gamma)}(x; t) .
\end{align}
Since LLT polynomials are symmetric, every $L_v$ is also symmetric.

We also remark that $\linkpoly_{1^n}$ is equal to the modified Macdonald polynomial $\widetilde{H}_{1^n}(x;q,t)$, which is also equal to the graded Frobenius series of the coinvariants of $\S_n$ with grading in $t$.

Next, we note that the Poincar\'e series $f_v(q,a,t)$ can be recovered as a certain inner product of $\linkpoly_v$. We follow the standard notation for symmetric functions and their usual inner product, as described in Chapter 7 of \cite{ec2}. Before we can prove Theorem \ref{thm:fv-lv}, we need the following lemma.

\begin{lemma}
\label{lemma:first1}
\begin{align}
\linkpoly_{0^n} &= \frac{1}{1-q} \linkpoly_{10^{n-1}} .
\end{align} 
\end{lemma}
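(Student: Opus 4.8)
The plan is to prove the identity by a weight-preserving bijection on the combinatorial objects indexing the two sides. First I would unwind the definitions: $\linkpoly_{0^n}=\sum q^{\area(\gamma)}t^{\dinv(\gamma,\pi)}x^{\pi}$ with the sum over $\gamma\in\mathbb{P}^n$ and $\pi\in\mathbb{P}^n$, while $\linkpoly_{10^{n-1}}=\sum q^{\area(\gamma)}t^{\dinv(\gamma,\pi)}x^{\pi}$ over $\gamma$ with $\gamma_1=0$, $\gamma_i\ge 1$ for $i\ge 2$, and $\pi\in\mathbb{P}^n$. Writing $\tfrac{1}{1-q}=\sum_{j\ge 0}q^{j}$, it suffices to construct a bijection
\[
(\gamma,\pi)\longmapsto(j,\delta,\rho),\qquad \gamma\in\mathbb{P}^n,\ \ j\ge 0,\ \ \delta_1=0,\ \ \delta_i\ge 1\ (i\ge 2),\ \ \pi,\rho\in\mathbb{P}^n,
\]
satisfying $\area(\gamma)=j+\area(\delta)$, $\dinv(\gamma,\pi)=\dinv(\delta,\rho)$, and $x^{\pi}=x^{\rho}$. (When $n=1$ this is transparent: $\linkpoly_{0}=\sum_{\gamma_1\ge 1}\sum_{\pi_1\ge 1}q^{\gamma_1-1}x_{\pi_1}=\tfrac{1}{1-q}\linkpoly_{1}$, with the free parameter $j=\gamma_1-1$.)

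The bijection should ``compress'' the diagram downward. Starting from $(\gamma,\pi)$ with $\gamma\in\mathbb{P}^n$, one slides the columns down — the first column descending to diagonal $0$ — until the sequence of diagonals forms a legal $10^{n-1}$-pattern $\delta$, and records the total decrease in $\area$ as $j$, which the factor $\tfrac{1}{1-q}$ absorbs. Since $\dinv$ is a sum over pairs of columns, only finitely many pairs change under this sweep, and the change is governed entirely by the diagonals of the columns near those that move and by the order relations among the corresponding labels; one uses exactly this data to produce $\rho$ from $\pi$, essentially by transposing certain labels past one another, so that $\dinv$ is restored.

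The main obstacle is to pin down the sweep together with the accompanying relabeling and to check that the result is a genuine bijection. The naive attempt — leave $\pi$ fixed and merely drop the first column to diagonal $0$ — does not preserve $\dinv$: for instance a column $j$ with $\gamma_j=\gamma_1$ contributes $[\pi_1>\pi_j]$ before the move and $[\pi_1<\pi_j]$ afterward, so the labels genuinely must be rearranged, and one has to verify that the prescription is consistent for all intermediate diagonals and that the map inverts. Already for $n=2$ this forces several regimes: an attacking $\gamma$ collapses onto the unique attacking pattern $(0,1)$ with $\pi_1$ possibly transposed, whereas a non-attacking $\gamma$ maps to a non-attacking pattern with the labels left alone and $j$ encoding a gap among the available diagonals; the general case amounts to carrying this out one attacking cluster at a time. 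Equivalently, after the substitution $\gamma\mapsto\gamma+\mathbf{1}$ one has $\linkpoly_{0^n}=\sum_{\beta\in\mathbb{N}^n}q^{|\beta|}G_{\lambda(\beta)}(x;t)$, with a parallel expression for $\linkpoly_{10^{n-1}}$, so the lemma becomes an identity among unicellular LLT polynomials; the delicate point there is again the interaction of the first cell with the cells on diagonals $0$ and $1$.
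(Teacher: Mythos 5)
There is a genuine gap: your argument is a plan for a bijection, not a bijection. The map $(\gamma,\pi)\mapsto(j,\delta,\rho)$ is never actually defined --- you yourself note that leaving $\pi$ fixed while dropping columns fails to preserve $\dinv$, that the labels ``genuinely must be rearranged,'' and that one must ``pin down the sweep together with the accompanying relabeling'' and check invertibility. That pinned-down prescription is exactly the missing content; the assertion that the general case ``amounts to carrying this out one attacking cluster at a time'' is not substantiated, and for a sweep through many diagonals the interaction between clusters (columns on consecutive diagonals with interleaved labels) is precisely where such naive compressions break. So as written the proposal does not prove the lemma.

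It is worth contrasting this with the paper's route, which avoids any relabeling. Rather than building a direct weight-preserving bijection onto triples $(j,\delta,\rho)$, the paper proves the self-similar recursion
\begin{align}
\linkpoly_{0^n} &= q^n \linkpoly_{0^n} + \left(1+q+\cdots+q^{n-1}\right)\linkpoly_{10^{n-1}},
\end{align}
which immediately gives the lemma. The two cases are handled by operations that keep each label attached to its column: if every $\gamma_i>1$, decrement all entries (area drops by $n$, $\dinv$ untouched); otherwise, take the rightmost $k$ with $\gamma_k=1$, decrement the suffix $\gamma_k,\ldots,\gamma_n$ and cyclically rotate it (together with the corresponding $\pi$-entries) to the front. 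A pair $(i,j)$ with $i<k\le j$ that satisfied $\gamma_i=\gamma_j,\ \pi_i>\pi_j$ becomes, after the rotation and the decrement, a pair satisfying the ``$+1$'' condition in the new order, and vice versa, so $\dinv$ is exactly preserved while area drops by $n-k$; the resulting word has first entry $0$ and all other entries positive. This cycling observation is the key idea your sketch is missing, and it is what lets the paper bypass the delicate label-rearrangement problem you ran into.
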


\begin{proof}
By definition, 
\begin{align}
\linkpoly_{0^n} &= \sum_{\gamma, \pi \in \mathbb{P}^n} q^{\area(\gamma)} t^{\dinv(\gamma, \pi)} x^{\pi} .
\end{align}
Our aim is to show that
\begin{align}
\label{first1-proof}
\linkpoly_{0^n} &= q^n \linkpoly_{0^n} + \left(1 + q + \ldots + q^{n-1} \right) \linkpoly_{10^{n-1}}
\end{align}
which clearly implies the lemma. 

If $\gamma_i > 1$ for all $i$, then let $\gamma^{\prime}$ be the word obtained by decrementing each entry in $\gamma$ by 1. Set $\pi^{\prime} = \pi$. Note that the pair $(\gamma^{\prime}, \pi^{\prime})$ has
\begin{align}
\area(\gamma^{\prime}) &= \area(\gamma) - n \\
\dinv(\gamma^{\prime}, \pi^{\prime}) &= \dinv(\gamma, \pi) \\
x^{\pi^{\prime}} = x^{\pi} .
\end{align}
Furthermore, every pair of words of positive integers can be obtained as $(\gamma^{\prime}, \pi^{\prime})$ in this fashion. This case corresponds to the first term on the right-hand side of \eqref{first1-proof}.

The other case we must consider is if $\gamma_i = 1$ for some $i$. Let $k$ be the rightmost position such that $\gamma_k = 1$. Then we define
\begin{align}
\gamma^{\prime \prime} &= (\gamma_k - 1)(\gamma_{k+1} - 1) \ldots (\gamma_{n} - 1) \gamma_1 \gamma_2 \ldots \gamma_{k-1} \\
\pi^{\prime \prime} &= \pi_k \pi_{k+1} \ldots \pi_{n} \pi_1 \pi_2 \ldots \pi_{k-1}.
\end{align}
It is straightforward to check that
\begin{align}
\area(\gamma^{\prime \prime }) &= \area(\gamma) - (n-k) \\
\dinv(\gamma^{\prime \prime }, \pi^{\prime \prime }) &= \dinv(\gamma, \pi) \\
x^{\pi^{\prime \prime }} &= x^{\pi} .
\end{align}
Furthermore, by construction we have $\gamma^{\prime \prime }_1 = 0$ and the other entries of $\gamma^{\prime \prime }$ are greater than 0. Summing over all values of $k$ and pairs $(\gamma^{\prime \prime }, \pi^{\prime \prime })$ obtained in this way, we get the remaining terms in the right-hand side of \eqref{first1-proof}. 
\end{proof}

\begin{thm}
\label{thm:fv-lv}
For any $v \in \{0,1\}^n$,
\begin{align}
f_{v}(q, a, t) &= \sum_{d=0}^{n} \left\langle \linkpoly_{v}, e_{n-d} h_d \right\rangle a^d .
\end{align}
\end{thm}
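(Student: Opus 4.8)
The plan is to prove the identity by induction on $n$, showing that the right-hand side obeys the same recurrence that Elias and Hogancamp establish for $f_v$. Write $G_v(q,a,t) = \sum_{d=0}^{n} \langle \linkpoly_v, e_{n-d} h_d \rangle a^d$ for the right-hand side. For $n = 0$ one has $\linkpoly_{\emptyset} = 1$ (the empty product), so $G_{\emptyset} = \langle 1, 1 \rangle = 1 = f_{\emptyset}$; the length-one cases can be checked by hand, since there $\linkpoly_{1} = h_1$ and $\linkpoly_{0} = (1-q)^{-1} h_1$ and the associated links are unknots. Because $G$ and $f$ then agree in length $0$, it suffices to show that $v \mapsto G_v$ satisfies the Elias--Hogancamp reduction rules as $n$ grows.

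Those rules reduce a word $v \in \{0,1\}^n$ to words of length $n-1$, with one rule for adjoining a letter $0$ (feeding a strand into the full twist $\fulltwist_{n-m}$) and one for adjoining a letter $1$ (feeding a strand into $K_m$). I would match them in two stages. First, establish the analogous reductions at the level of link symmetric functions: (i) an identity expressing $\linkpoly_v$ with a distinguished $1$ in terms of shorter $\linkpoly$'s through a $t$-weighted operator that inserts a new cell on the zero diagonal (the $n=1$ instance being $\linkpoly_{1} = h_1 \cdot \linkpoly_{\emptyset}$, with nontrivial corrections appearing already at $\linkpoly_{11} = h_1^2 - (1-t) e_2$), and (ii) an identity expressing $\linkpoly_v$ with a distinguished $0$ in terms of the versions with that letter changed to $1$, through a $q$-geometric-series factor; Lemma~\ref{lemma:first1} is exactly the prototype of (ii), obtained by the dichotomy ``decrement every column, or cyclically rotate a height-one column to the front.'' Second, apply $\langle\, \cdot\, , e_{n-d} h_d \rangle$ to (i) and (ii): using that $h_1^{\perp}$ (the adjoint of multiplication by $h_1$) is a derivation with $h_1^{\perp} e_k = e_{k-1}$ and $h_1^{\perp} h_k = h_{k-1}$, so that $h_1^{\perp}(e_{n-d} h_d) = e_{n-d-1} h_d + e_{n-d} h_{d-1}$, one reads off how the coefficient of $a^d$ transforms; in particular the insertion of a ``$1$'' shifts the index $d$ by one, and this shift is what reproduces the Hochschild grading $a$ on the right-hand side.

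The crux, and the step I expect to be hardest, is this second stage: matching, with all $q$ and $t$ weights, the symmetric-function manipulations against the precise form of the Elias--Hogancamp recurrence, and in particular bookkeeping the variable $a$. Two points need care. First, $\linkpoly_v$ is genuinely order-sensitive, being a nonnegative sum of unicellular LLT polynomials $G_{\lambda(\gamma)}(x;t)$, which depend on the linear order of the cells; one must therefore track whether the recurrence adjoins letters at the front or at the back, and it will likely help first to prove that $\linkpoly_v$ depends only on the cyclic class of $v$ (the rotation argument in the proof of Lemma~\ref{lemma:first1} already suggests this). Second, away from the torus-link case $v = 0^n$, the ``$1$'' reduction carries genuine $t$-dependence, as the correction term $-(1-t) e_2$ shows, so the insertion operator of (i) and its interaction with the $e_{n-d} h_d$ pairing must be computed exactly rather than estimated. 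As a consistency check, in the case $v = 0^n$ the resulting formula can be compared directly with the explicit combinatorial expression Elias and Hogancamp give for $f_{0^n}$.
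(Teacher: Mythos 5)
Your overall strategy (take the Elias--Hogancamp recurrence as the definition of $f_v$ and show the inner-product expression satisfies it) is the same as the paper's, but as written the proposal has a genuine gap: the step you yourself flag as ``the crux'' is exactly the content of the theorem, and the reduction rules you plan to match are not the ones the recurrence actually has. The recurrence \eqref{eh-recurrence} is not a letter-by-letter reduction from length $n$ to length $n-1$: in a single step it passes from $v\in\{0,1\}^n$ to words $w\in\{0,1\}^{n-|v|}$, stripping \emph{all} positions with $v_i=1$ at once, with the weight $q^{n-|v|-|w|}P_{v,w}(a,t)$. Your proposed identities (i) and (ii) --- a $t$-weighted operator inserting a single ``$1$'' and a $q$-geometric factor converting a single distinguished ``$0$'' --- are never established; identity (ii) in particular is essentially Bergeron's relation \eqref{bergeron-append-0}, which is stated in the paper only as a conjecture, and Lemma~\ref{lemma:first1} is not a prototype of it (it treats $0^n$ globally via the decrement-or-rotate dichotomy, not a single distinguished letter). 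Similarly, the cyclic invariance of $\linkpoly_v$ that you hope to lean on is not proved. So the induction has no verified recurrence to run on.

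The paper closes this gap differently and more directly: the inner products $\langle \linkpoly_v, e_{n-d}h_d\rangle$ are interpreted combinatorially (superization) by letting $\pi$ range over $\{\underline{0},1\}^n$, with $\underline{0}$ counted as smaller than itself for $\dinv$ and $a$ recording the number of $1$'s --- this handles the Hochschild grading without any $h_1^{\perp}$ manipulations. Then one peels off the bottom two rows of the diagram of $\gamma$: removing the columns with $\gamma_i=0$ (i.e.\ $v_i=1$) and decrementing the rest produces a configuration counted by $\linkpoly_w(q,a,t)$, while the factor $q^{n-|v|-|w|}$ accounts for the lost row-one area and $P_{v,w}(a,t)$ accounts exactly for the $\dinv$ pairs with $\gamma_i=\gamma_j=0$ or $\gamma_i=0,\gamma_j=1$ (each projecting onto a unique zero column, contributing $t^{\#\{j<i:\,u_j=1\}+\#\{j>i:\,u_j=2\}}$ or $a$ according to the label there). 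The non-well-founded case $v=0^n$ is handled by Lemma~\ref{lemma:first1}. If you want to salvage your plan, you would either have to prove the single-letter identities (currently conjectural) or replace them with this all-at-once peeling argument.
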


\begin{proof}
Let us denote the right-hand side of the statement in the theorem by $\linkpoly_{v}(q, a, t)$. In \cite{elias-hogancamp}, the authors prove that $f_v(q,a,t)$ satisfies a certain recurrence. We will use their recurrence as our definition of $f_{v}(q,a,t)$. 

Given $v \in \{0,1\}^n$ and $w \in \{0,1\}^{n-|v|}$, we form a word $u \in \{0,1,2\}^n$ that depends on $v$ and $w$. We set $u_i = 1$ if $v_i = 1$. If $v_i = 0$, say that we are at the $j$th zero in $v$, counting from left to right. Then we set $u_i = 2w_j$. For example, if $v = 10110100$ and $w=0110$ then $u = 10112120$. We form a product
\begin{align}
\label{f-rec}
P_{v,w}(a, t)  = \prod_{i\, :\, v_i = 1} \left(t^{\# \{j < i \, : \, u_j = 1\} + \# \{j > i \, : \, u_j = 2\}} + a\right) .
\end{align}
Then the recurrence in \cite{elias-hogancamp} is 
\begin{align}
\label{eh-recurrence}
f_v(q,a,t) &= \sum_{w \in \{0,1\}^{n-|v|}} q^{n-|v|-|w|} P_{v,w}(a,t)  f_{w}(q,a,t) 
\end{align}
with base cases $f_{\emptyset}(q,a,t) = 1$ and $f_{0^n}(q,a,t) = (1-q)^{-1} f_{10^{n-1}}(q,a,t)$. We use this as the definition of $f_{v}(q,a,t)$. 

The goal of this proof is to show that $L_{v}(q,a,t)$ satisfies \eqref{f-rec}. As discussed in \cite{haglund-book}, taking the inner product with $e_{n-d} h_d$ can be thought of as replacing $\pi$ with a word containing $n-d$ $\underline{0}$'s and $d$ $1$'s. For the purposes of computing $\dinv(\gamma, \pi)$ we consider $\underline{0}$ to be less than itself, but we do not make this convention for 1. For example, if $\gamma = 1111$ and $\pi = \underline{0}1\underline{0}1$, we have $\dinv(\gamma, \pi) = 2$, where the two pairs we count are $(1, 3)$ and $(1, 2)$. With these definitions, we can write
\begin{align}
\linkpoly_v(q,a,t) &= \sum_{\substack{\gamma \in \mathbb{N}^n, \, \pi \in \{\underline{0}, 1\}^n \\ \gamma_i = 0 \Leftrightarrow v_i = 1}} q^{\area(\gamma)} t^{\dinv(\gamma, \pi)} a^{\# \text{1's in }\pi} .
\end{align}

Given such a word $\gamma$, we form a word $u$ by setting $u_i = 1$ if $\gamma_i = 0$, $u_i = 2$ if $\gamma_i = 1$, and $u_i = 0$ otherwise. From this word $u$ we construct another word $w \in \{0,1\}^{n-|v|}$ by scanning $u$ from left to right and appending a $1$ to $w$ whenever we see a $2$ in $u$ and appending a $0$ to $w$ whenever we see a $0$ in $u$. For example, if $\gamma =  013021$ we have $u = 120102$ and $w = 1001$. 

Now we can explain why $\linkpoly_v(q,a,t)$ satisfies \eqref{eh-recurrence}. First, we note that the $q^{n-|v|-|w|}$ term counts the contribution of empty boxes in row 1 to area. We also claim that $P_{v,w}(a,t)$ uniquely counts the contributions from dinv pairs $(i,j)$ with either $\gamma_i = \gamma_j = 0$ or $\gamma_i = 0$ and $\gamma_j = 1$. For each such pair, say that the pair \emph{projects onto} $j$ if $\gamma_i = \gamma_j = 0$ or $i$ if $\gamma_i = 0$ and $\gamma_j = 1$. Then every such pair projects onto a unique $i$ such that $\gamma_i = 0$, which is equivalent to $v_i = 1$. Furthermore, the number of pairs projecting onto a particular $i$ is 0 if $\pi_i = 1$ and
\begin{align}
\# \{j < i \, : \, \gamma_j = 0\} + \# \{j > i \, : \, \gamma_j = 1\} = \# \{j < i \, : \, u_j = 1\} + \# \{j > i \, : \, u_j = 2\}
\end{align}
if $\pi_i = \underline{0}$. Hence, $P_{v,w}(a,t)$ accounts for the contribution all such dinv pairs. By induction, $L_{w}(q,a,t)$ accounts for all other area and all other dinv pairs. The $v=0^n$ case follows from Lemma \ref{lemma:first1}.
\end{proof}

For the sake of comparison with \cite{elias-hogancamp}, we give a simplified formula that directly computes $f_v(q,a,t)$ from Theorem \ref{thm:fv-lv}. Given $\gamma \in \mathbb{N}^n$ and $1 \leq i \leq n$, let 
\begin{align}
\dinv_i(\gamma) &= \# \{j < i : \gamma_j = \gamma_i\} + \# \{j > i : \gamma_j  = \gamma_i + 1 \}.
\end{align}

\begin{cor}
\begin{align}
f_v(q,a,t) &= \sum_{\substack{\gamma \in \mathbb{N}^n \\ \gamma_i = 0 \Leftrightarrow v_i = 1}} q^{\area(\gamma)} \prod_{i=1}^{n} \left(a + t^{\dinv_i(\gamma)} \right) 
\end{align}
where, as before, $\area(\gamma) = |\gamma| - \# \{1 \leq i \leq n : \gamma_i > 0 \}$. 
\end{cor}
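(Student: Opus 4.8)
The plan is to obtain the corollary from Theorem~\ref{thm:fv-lv} by feeding in the combinatorial reading of $\langle \linkpoly_v, e_{n-d} h_d\rangle$ worked out inside its proof and then collapsing the sum over $\pi$. Combining the statement of Theorem~\ref{thm:fv-lv} with the identity established in the course of its proof (following \cite{haglund-book}), one has
\begin{align}
f_v(q,a,t) &= \sum_{\substack{\gamma \in \mathbb{N}^n, \, \pi \in \{\underline{0},1\}^n \\ \gamma_i = 0 \Leftrightarrow v_i = 1}} q^{\area(\gamma)} t^{\dinv(\gamma,\pi)} a^{\# \text{1's in }\pi},
\end{align}
where $\pi$ ranges over the super-alphabet $\{\underline{0},1\}$ and $\dinv(\gamma,\pi)$ is computed with the convention that $\underline{0}$ counts as less than itself but $1$ does not. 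Since neither $\area(\gamma)$ nor the constraint $\gamma_i = 0 \Leftrightarrow v_i = 1$ involves $\pi$, it suffices to prove, for every $\gamma \in \mathbb{N}^n$, the factorization
\begin{align}
\label{plan-factor}
\sum_{\pi \in \{\underline{0},1\}^n} t^{\dinv(\gamma,\pi)} \, a^{\# \text{1's in }\pi} &= \prod_{i=1}^{n} \left( a + t^{\dinv_i(\gamma)} \right).
\end{align}

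To prove \eqref{plan-factor} I would expand the product on the right over subsets, $\prod_{i=1}^{n}(a + t^{\dinv_i(\gamma)}) = \sum_{S \subseteq [n]} a^{n - |S|} \, t^{\sum_{i \in S} \dinv_i(\gamma)}$, and on the left identify each $\pi$ with the set $S = \{i : \pi_i = \underline{0}\}$ of positions carrying a big letter, so that $\# \text{1's in }\pi = n - |S|$. This reduces \eqref{plan-factor} to the single identity
\begin{align}
\dinv(\gamma, \pi) &= \sum_{i \in S} \dinv_i(\gamma) \qquad \text{whenever } S = \{i : \pi_i = \underline{0}\}.
\end{align}
I would prove this by a charging argument. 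By definition, $\dinv_i(\gamma)$ counts exactly the pairs $(j,i)$ with $j < i$ and $\gamma_j = \gamma_i$, together with the pairs $(i,j)$ with $i < j$ and $\gamma_j = \gamma_i + 1$; call $i$ the \emph{owner} of each such pair. Then a pair $(a,b)$ with $a < b$ and $\gamma_a = \gamma_b$ has unique owner $b$, a pair with $a < b$ and $\gamma_a + 1 = \gamma_b$ has unique owner $a$, and no other pair has an owner, so $\sum_{i \in S} \dinv_i(\gamma)$ is exactly the number of pairs whose owner lies in $S$. On the other hand, the key feature of the modified order on $\{\underline{0},1\}$ is that ``$\pi_a > \pi_b$'' holds precisely when $\pi_b = \underline{0}$, regardless of $\pi_a$, and ``$\pi_a < \pi_b$'' holds precisely when $\pi_a = \underline{0}$, regardless of $\pi_b$. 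Hence an equal-value pair $(a,b)$ contributes to $\dinv(\gamma,\pi)$ exactly when $\pi_b = \underline{0}$, i.e.\ exactly when its owner $b$ belongs to $S$; and a consecutive-value pair $(a,b)$ contributes exactly when $\pi_a = \underline{0}$, i.e.\ exactly when its owner $a$ belongs to $S$. Summing over all pairs gives $\dinv(\gamma,\pi) = \sum_{i \in S} \dinv_i(\gamma)$, which is the desired identity.

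I expect the one point needing care to be the displayed property of the modified order --- that ``$\pi_a > \pi_b$'' sees only $\pi_b$ while ``$\pi_a < \pi_b$'' sees only $\pi_a$ --- which is exactly what it means for $\underline{0}$ to lie below every letter of $\{\underline{0},1\}$, itself included, while $1$ lies below none, itself included; this is the convention fixed in the proof of Theorem~\ref{thm:fv-lv}. Everything else is bookkeeping, since $\area$ and the zero-pattern constraint never involve $\pi$ and the correspondence $\pi \leftrightarrow S$ is immediate. As an alternative packaging, one may observe that \eqref{plan-factor} is precisely the evaluation $\sum_{d} a^d \langle G_{\mathbf{\lambda}(\gamma)}(x;t), e_{n-d} h_d \rangle = \prod_{i=1}^{n}(a + t^{\dinv_i(\gamma)})$ of a unicellular LLT polynomial; the corollary then follows from this together with the LLT expansion of $\linkpoly_v$ recorded earlier in Section~\ref{sec:infinite}, and the charging argument above amounts to a self-contained proof of that LLT identity.
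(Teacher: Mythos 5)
Your proposal is correct and follows exactly the route the paper intends: the paper states the corollary as a direct consequence of Theorem \ref{thm:fv-lv}, whose proof already records the super-alphabet formula $f_v(q,a,t)=\sum_{\gamma,\pi\in\{\underline{0},1\}^n} q^{\area(\gamma)}t^{\dinv(\gamma,\pi)}a^{\#\text{1's}}$, and your charging argument simply makes explicit the factorization $\sum_{\pi}t^{\dinv(\gamma,\pi)}a^{\#\text{1's}}=\prod_i\left(a+t^{\dinv_i(\gamma)}\right)$ that the paper leaves unstated. Your reading of the convention on $\underline{0}$ (so that a strict comparison between two $\underline{0}$'s always registers, while $1$ never compares strictly with itself) is the right one --- it reproduces the paper's count $\dinv(1111,\underline{0}1\underline{0}1)=2$, even though the specific pairs listed there appear to be $(1,3)$ and $(2,3)$ rather than the pair $(1,2)$ printed in the text.
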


If $v=0^n$ and $a=0$, this is exactly Theorem 1.9 in \cite{elias-hogancamp}.

\section{A finite formula}
\label{sec:finite}

Although the combinatorial definition of $\linkpoly_{v}$ is straightforward, it is not computationally effective\footnote{There are also infinitely many $\pi \in \mathbb{P}^n$, but this problem can be rectified with standardization \cite{haglund-book}.} since it is a sum over infinitely many words $\gamma \in \mathbb{N}^n$. We rectify this issue in Theorem \ref{thm:link-finite} below. The idea is to compress the vectors $\gamma$ while altering the statistics so that the link polynomial $\linkpoly_v$ is not changed.

\begin{defn}
A word $\gamma \in \mathbb{N}^n$ is a \emph{Fubini word} if every integer $0 \leq k \leq \max(\gamma)$ appears in $\gamma$. \end{defn}

For example, $41255103$ is a Fubini word but $20141022$ is not a Fubini word, since it contains a 4 but not a 3. We call these Fubini words because they are counted by the Fubini numbers (\cite{oeis}, A000670), which also count ordered partitions of the set $\{1,2,\ldots,n\}$. We will actually be interested in certain decorated Fubini words.

\begin{defn}
Given $v \in \{0,1\}^n$, we say that a Fubini word $\gamma$ is \emph{associated} with $v$ if either
\begin{itemize}
\item $v = 0^n$ and the only zero in $\gamma$ occurs at $\gamma_1$, or
\item $v \neq 0^n$ and $\gamma_i = 0$ if and only if $v_i = 1$. 
\end{itemize}
\end{defn}

\begin{defn}
A \emph{barred Fubini word} associated with $v$ is a Fubini word $\gamma$ associated with $v$ where we may place bars over certain entries. Specifically, the entry $\gamma_j$ may be barred if 
\begin{enumerate}
\item $\gamma_j > 0$, 
\item $\gamma_j$ is unique in $\gamma$, and
\item for each $i<j$ we have $\gamma_i < \gamma_j$, i.e.\ $\gamma_j$ is a left-to-right maximum in $\gamma$. 
\end{enumerate}
We denote the collection of barred Fubini words associated with $v$ by $\fubini_v$. 
\end{defn}

For example, 
\begin{align}
\fubini_0 &= \{0\} \\
\fubini_{00} &= \{01, 0\overline{1}\} \\
\fubini_{000} &= \{011, 012, 0\overline{1}2, 01\overline{2}, 0\overline{1}\overline{2}, 021, 0\overline{2}1 \} .
\end{align}
The sequence $|\fubini_{0^n}|$ for $n \in \mathbb{N}$ begins $1, 1, 2, 7, 35, 226,\ldots$ and seems to appear in the OEIS as A014307 \cite{oeis}. One way to define sequence A014307 is that it has exponential generating function
\begin{align}
\sqrt{\frac{e^z}{2 - e^z}} .
\end{align}
This sequence is given several combinatorial interpretations in \cite{ren}. It would be interesting to obtain a bijection between $\fubini_{0^n}$ and one of the collections of objects in \cite{ren}. See Figure \ref{fig:fubini-words} for more examples of barred Fubini words.

\begin{figure}
\begin{tabular}{c|c}
$v$	& $\fubini_v$ \\\hline
111	& $000$ \\
011	& $100, \overline{1}00$\\
101	& $010, 0\overline{1}0$\\
110	& $001, 00\overline{1}$\\
001 & $110, 120, 1\overline{2}0, \overline{1}20, \overline{1}\overline{2}0, 210, \overline{2}10 $\\
010	& $101, 102, 10\overline{2}, \overline{1}02, \overline{1}0\overline{2}, 201, \overline{2}01$\\
100	& $011, 012, 0\overline{1}2, 01\overline{2}, 0\overline{1}\overline{2}, 021, 0\overline{2}1  $ \\
000	& $011, 012, 0\overline{1}2, 01\overline{2}, 0\overline{1}\overline{2}, 021, 0\overline{2}1  $
\end{tabular}
\caption{We have listed the barred Fubini words $\fubini_v$ for each $v \in \{0,1\}^3$.}
\label{fig:fubini-words}
\end{figure}

Given a barred Fubini word $\gamma$ and a word $\pi \in \mathbb{P}^n$, we modify the dinv statistic slightly:
\begin{align}
\dinv(\gamma, \pi) &= \# \{1 \leq i < j \leq n : \gamma_i = \gamma_j, \pi_i > \pi_j \} \\
&+ \#\{1 \leq i < j \leq n : \gamma_i + 1 = \gamma_j, \pi_i < \pi_j, \gamma_j \text{ is not barred} \} \nonumber
\end{align}
We also let $\barstat(\gamma)$ be the number of barred entries in $\gamma$. We have the following result.

\begin{thm}
\label{thm:link-finite}
For $v \in \{0,1\}^n$, 
\begin{align}
\linkpoly_{v} &= \sum_{\substack{\gamma \in \fubini_v \\ \pi \in \mathbb{P}^n}} q^{\area(\gamma) + \barstat(\gamma)} t^{\dinv(\gamma, \pi)} (1-q)^{-\barstat(\gamma)-\chi(v=0^n)} x^{\pi}
\end{align}
where $\chi$ of a statement is 1 if the statement is true and 0 if it is false.
\end{thm}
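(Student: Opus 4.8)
The plan is to establish a bijective, statistic-preserving correspondence between the infinite index set $\{(\gamma,\pi) : \gamma \in \mathbb{N}^n,\ \pi \in \mathbb{P}^n,\ \gamma_i = 0 \Leftrightarrow v_i = 1\}$ appearing in the definition of $\linkpoly_v$ and a ``compressed'' index set built from $\fubini_v$, where the price of compression is bookkept by the geometric-series factors $(1-q)^{-\barstat(\gamma)}$. Concretely, I would define a map that takes an arbitrary $\gamma$ (with its sorted set of distinct positive values $0 < d_1 < d_2 < \cdots < d_k$) and produces a Fubini word $\widehat\gamma$ by collapsing the value $d_\ell$ down to $\ell$, while recording, for each collapsed ``gap,'' how far the original entries were spread out. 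The key observation is that $\area$, $\dinv$, and $x^\pi$ are essentially insensitive to the absolute sizes of the values of $\gamma$ and depend only on (i) the multiplicities of each distinct value, (ii) the relative order of consecutive distinct values, and (iii) which values are adjacent in the ``$\gamma_i + 1 = \gamma_j$'' sense. Shifting a maximal run of top values uniformly by $1$ changes $\area$ by the number of positive entries and leaves $\dinv$ and $x^\pi$ fixed, which is exactly the mechanism already used in Lemma \ref{lemma:first1}; iterating this yields a $(1-q)^{-1}$ factor per independent ``shiftable block,'' and the barred entries are precisely designed to index these blocks.

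The steps, in order, would be: \textbf{(1)} For the case $v \neq 0^n$, first reduce to Fubini words with the correct zero-pattern: given $\gamma$ associated with $v$ in the sense $\gamma_i = 0 \Leftrightarrow v_i = 1$ but not necessarily Fubini, collapse its set of distinct positive values to an initial segment $\{1,\dots,k\}$. A value $d_\ell$ that was ``isolated'' (i.e.\ $d_\ell + 1$ was not among the distinct values of $\gamma$, equivalently there was a gap above it) and that is a left-to-right maximum at its unique occurrence can be pushed up arbitrarily without affecting $\dinv$ or $x^\pi$; each such push costs one in $\area$. \textbf{(2)} Show that the conditions (1)--(3) in the definition of barred Fubini words characterize exactly which positions of the Fubini word $\widehat\gamma$ can be ``un-collapsed'': positivity is needed because $\area$ only penalizes positive entries; uniqueness is needed because if $\gamma_j$ were repeated, decrementing a maximal top run would change $\dinv$ among the repeated copies or with entries just below; and the left-to-right-maximum condition guarantees that raising $\gamma_j$ does not create or destroy any $\gamma_i = \gamma_j$ coincidences with earlier entries, and—combined with the modified $\dinv$ that ignores the ``$\gamma_i+1=\gamma_j$'' pairs when $\gamma_j$ is barred—that the second kind of $\dinv$ pair is also unaffected. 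This is where the modified $\dinv$ in the theorem statement (the extra clause ``$\gamma_j$ is not barred'') and the summand $q^{\barstat(\gamma)}$ enter: barring $\gamma_j$ records that it has been collapsed down by at least one, contributing a forced $+1$ to $\area$ (hence $q^{\barstat(\gamma)}$), and the geometric factor $(1-q)^{-1}$ sums over the remaining free amount it could be raised, namely $q^0 + q^1 + q^2 + \cdots$. \textbf{(3)} Verify that the barred entries are ``independent'' — that the blocks they control can be shifted separately — so the factors multiply to $(1-q)^{-\barstat(\gamma)}$; here the left-to-right-maximum condition is again what makes the blocks nested/disjoint in the right way. \textbf{(4)} Handle the $v = 0^n$ case separately: by Lemma \ref{lemma:first1}, $\linkpoly_{0^n} = (1-q)^{-1}\linkpoly_{10^{n-1}}$, and $\fubini_{0^n}$ is defined to equal (as a set of words, via the bijection $\gamma \mapsto \gamma$) the Fubini words for $10^{n-1}$ with the single zero moved to the front; the extra global $(1-q)^{-1}$ is recorded by the $\chi(v=0^n)$ in the exponent. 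One checks that the already-established $v = 10^{n-1}$ case, multiplied by this factor, gives precisely the $v=0^n$ formula.

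The main obstacle I anticipate is step \textbf{(2)}–\textbf{(3)}: making precise the claim that the three bar-eligibility conditions exactly capture the directions in which one may uniformly inflate $\gamma$ without disturbing $\dinv$, and that these inflations are jointly independent so that the geometric series genuinely factor as a product $(1-q)^{-\barstat(\gamma)}$ rather than something more entangled. The cleanest way to do this is probably to set up the inverse map explicitly: start from $(\gamma, \pi)$ with $\gamma \in \mathbb{N}^n$, let $0 < d_1 < \cdots < d_k$ be the distinct positive values, and define $\widehat\gamma$ by replacing $d_\ell$ with $\ell$; then a position $j$ gets barred precisely when $\gamma_j$ is a unique positive left-to-right maximum whose value is ``strictly above'' the next value down by more than the collapse would force (equivalently, when in $\widehat\gamma$ the entry is a unique positive left-to-right maximum and the original $\gamma$ had slack there). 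Checking that this is well-defined, that the claimed statistic identities hold term-by-term, and that it is a bijection onto the stated index set with the correct $q$-weight is the technical heart; once the framework of step (1) is in place, each individual verification is a routine but slightly fussy case analysis on the two types of $\dinv$ pairs, essentially a refinement of the argument in Lemma \ref{lemma:first1}.
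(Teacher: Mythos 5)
There is a genuine gap exactly where you place the ``technical heart,'' steps (2)--(3), and it is not a routine verification: the fibers of the plain collapse map do not carry weight $(1-q)^{-1}$ per barred entry, and when the value above a gap is repeated (or not a left-to-right maximum) there is no bar available even though slack exists, so no assignment of bars can make your correspondence weight-preserving fiber by fiber. Concretely, take $v = 100$ and the family $\gamma = 0cc$, $c \geq 1$, with $\pi$ fixed. Your map sends every member to the unbarred Fubini word $011$, but the total weight of this fiber is
\begin{align}
t^{\dinv(011,\pi)} \;+\; \sum_{c \geq 2} q^{2(c-1)}\, t^{\chi(\pi_2 > \pi_3)} \;=\; t^{\dinv(011,\pi)} \;+\; \frac{q^{2}}{1-q^{2}}\, t^{\chi(\pi_2 > \pi_3)},
\end{align}
whereas the theorem assigns $011$ (which admits no bars, since the value $1$ is repeated) the weight $t^{\dinv(011,\pi)}$ with no denominator at all, and no single barred word can produce a $1-q^2$ denominator. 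More generally, closing a gap of size $g$ lowers all $b$ entries lying above it, so a fiber contributes factors of the form $1/(1-q^{b})$, not $1/(1-q)$; relatedly, your step (1) claim that shifting a maximal top run by $1$ preserves $\dinv$ fails once the run becomes adjacent (in the ``$\gamma_i+1=\gamma_j$'' sense) to the values below it. The excess mass from such families is only recovered after recombining several fibers with several barred words, so the identity is not bijective in the shape you propose.

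This is precisely what the paper's proof supplies and your outline omits. The paper does not build a global correspondence: it interpolates through intermediate families $\fubini_v^{(k)}$ (all of $0,1,\dots,k$ present, bars only on values $\leq k$) and proves the weighted sums agree for consecutive $k$; at step $k$ it fixes the set $S$ of positions with $\gamma_i < k$, recognizes the entries outside $S$ as a shifted copy of $\linkpoly_{0^{n-|S|}}$, and applies Lemma \ref{lemma:first1}. The cyclic-rotation bijection inside that lemma is the indispensable engine: it contributes the factor $1+q+\cdots+q^{m-1}$ that converts $1/(1-q^{m})$ into $1/(1-q)$, and it forces the newly created barred $k$ to sit at the first position outside $S$. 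You invoke Lemma \ref{lemma:first1} only as an analogy and for the $v=0^n$ reduction (your step (4), which is fine), but without applying it, or an equivalent rotation-and-redistribution argument, to the block of top values at every scale, the claimed independence of bars and the product $(1-q)^{-\barstat(\gamma)}$ cannot be established.
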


\begin{proof}
Assume, for now, that $v \neq 0^n$. 
Let $\fubini_v^{(0)}$ denote the set of all $\gamma \in \mathbb{N}^n$ such that $\gamma_i = 0$ if and only if $v_i = 1$. For each $1 \leq k \leq n$, let $\fubini_v^{(k)}$ be the set of vectors $\gamma \in \mathbb{N}^n$ such that
\begin{enumerate}
\item $\gamma_i = 0$ if and only if $v_i = 1$,
\item each number $0,1,2,\ldots,k$ appears in $\gamma$.
\end{enumerate}
We also allow certain entries to be barred. Specifically, $\gamma_j \in \fubini_v^{(k)}$ may be barred if
\begin{enumerate}
\item $0 < \gamma_j \leq k$, 
\item $\gamma_j$ is unique in $\gamma$, and
\item for each $i<j$ we have $\gamma_i < \gamma_j$, i.e.\ $\gamma_j$ is a left-to-right maximum in $\gamma$. 
\end{enumerate}
Note that $\fubini_v^{(n)} = \fubini_v$, and is therefore finite. For convenience, we set
\begin{align}
\wt_{\gamma, \pi} &= \wt_{\gamma, \pi}(x; q, t) = q^{\area(\gamma) + \barstat(\gamma)} t^{\dinv(\gamma, \pi)} (1-q)^{-\barstat(\gamma)} x^{\pi} .
\end{align}
where the dinv statistic is the one we defined for barred Fubini words. Our goal is to show that
\begin{align}
\label{preserve-weight}
\sum_{\substack{\gamma \in \fubini_v^{(k-1)} \\ \pi \in \mathbb{P}^n}} \wt_{\gamma, \pi} &= \sum_{\substack{\gamma \in \fubini_v^{(k)} \\ \pi \in \mathbb{P}^n}} \wt_{\gamma, \pi}  
\end{align}
for each $1 \leq k \leq n$. Then we can chain together these identities for $k = 1, 2, \ldots, n$ to obtain the desired result.

First, we remove the intersection $\fubini_v^{(k-1)} \cap \fubini_v^{(k)}$ from both summands in \eqref{preserve-weight} to obtain the equivalent statement
\begin{align}
\label{preserve-weight2}
\sum_{\substack{\gamma \in \fubini_v^{(k-1)} \setminus \fubini_v^{(k)} \\ \pi \in \mathbb{P}^n}} \wt_{\gamma, \pi} &= \sum_{\substack{\gamma \in \fubini_v^{(k)} \setminus  \fubini_v^{(k)}  \\ \pi \in \mathbb{P}^n}} \wt_{\gamma, \pi}  .
\end{align}
Now we wish to describe the $\gamma$ that appear in the left- and right-hand summands of \eqref{preserve-weight2}. $\gamma \in \fubini_v^{(k-1)}$ is not in $\fubini_v^{(k)}$ if and only if it does not contain a $k$; similarly, $\gamma \in \fubini_v^{(k)}$ is not in $\fubini_v^{(k-1)}$ if and only if it contains a single $k$ and that $k$ is barred. This allows us to rewrite \eqref{preserve-weight2} as 
\begin{align}
\label{preserve-weight3}
\sum_{\substack{\gamma \in \fubini_v^{(k-1)} \\ k \notin \gamma \\ \pi \in \mathbb{P}^n}} \wt_{\gamma, \pi} &= \sum_{\substack{\gamma \in \fubini_v^{(k)} \\ \overline{k} \in \gamma \\ \pi \in \mathbb{P}^n}} \wt_{\gamma, \pi}  .
\end{align}
Specifically, for each subset $S \subseteq \{1,2,\ldots,n\}$ we will show that 
\begin{align}
\label{preserve-weight4}
\sum_{\substack{\gamma \in \fubini_v^{(k-1)} \\ k \notin \gamma \\ \gamma_i < k \Leftrightarrow i \in S \\ \pi \in \mathbb{P}^n}} \wt_{\gamma, \pi} &= \sum_{\substack{\gamma \in \fubini_v^{(k)} \\ \overline{k} \in \gamma \\ \gamma_i < k \Leftrightarrow i \in S \\ \pi \in \mathbb{P}^n}} \wt_{\gamma, \pi}  .
\end{align}
Then summing over all $S$ will conclude the proof.

We consider the left-hand side of \eqref{preserve-weight4}. Note that there cannot be any dinv between entries $i$ and $j$ if $\gamma_i < k$ and $\gamma_j > k$. In this sense, the entries $i$ with $\gamma_i < k$ are independent of the columns $j$ with $\gamma_j > k$. This allows us to write the left-hand side of \eqref{preserve-weight4} as a product 
\begin{align}
q^{n-|S|} \linkpoly_{0^{n-|S|}} F_{v, S}
\end{align} 
where $F_{v, S}$ is a certain symmetric function that accounts for all contribution to the weights coming from columns $i \in S$. The factor of $q$ appears because each of the entries $j \notin S$ has an empty box in the diagram that is not counted by either of the other factors. Now we can use Lemma \ref{lemma:first1} to rewrite this product as 
\begin{align}
\label{link-prod}
\frac{q^{n-|S|}}{1-q} \linkpoly_{10^{n-|S|-1}} F_{v, S}. 
\end{align}
Let $m$ be the minimal index not in $S$. Our last goal is to show that the product in \eqref{link-prod} is equal to the right-hand side of \eqref{preserve-weight4}. 

We note that, by the definition of dinv for barred words, there are no dinv pairs $(i,j)$ with $i \in S$ and $j \notin S$, i.e.\ $\gamma_i < k$ and $\gamma_j \geq k$ for $\gamma$ that appear in the sum on the right-hand side of \eqref{preserve-weight4}. We also note that $\linkpoly_{10^{n-|S|-1}}$ accounts for the contribution from columns $j \notin S$ except that it does not account for the bar on $\gamma_{m}$. This bar contributes a factor of $q/(1-q)$. Now there are $q^{n-|S|-1}$ columns with an extra box; these are the columns $j \notin S$ and $j \neq m$. The same polynomial $F_{v,S}$ accounts for the contributions of columns $i \in S$. Multiplying these together, we obtain \eqref{link-prod}.

Finally, we must address the case  $v = 0^n$. In this case, we immediately use $\linkpoly_{0^n} = (1-q)^{-1} \linkpoly_{10^{n-1}}$ and then proceed as above. This is why Fubini words associated with $0^n$ have an ``extra'' zero at the beginning. This also slightly adjusts the weight of the summands, explaining the $\chi(v=0^n)$ in the statement of the theorem.
\end{proof}

As in Section \ref{sec:infinite}, we give a formula for computing $f_v(q,a,t)$ directly. Given a barred Fubini word $\gamma$, we define
\begin{align}
\dinv_i(\gamma) &= \# \{j < i : \gamma_j = \gamma_i\} + \# \{j > i : \gamma_j  = \gamma_i + 1, \text{$\gamma_j$ is not barred} \}.
\end{align}

\begin{cor}
\begin{align}
f_v(q,a,t) &= \sum_{\gamma \in \fubini_v} q^{\area(\gamma) + \barstat(\gamma)} (1-q)^{-\barstat(\gamma) - \chi(v=0^n)} \prod_{i=1}^{n} \left(a + t^{\dinv_i(\gamma)} \right) 
\end{align}
\end{cor}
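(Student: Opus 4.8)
The plan is to deduce this corollary from Theorem~\ref{thm:link-finite} in precisely the way the corollary to Theorem~\ref{thm:fv-lv} in Section~\ref{sec:infinite} was deduced from Theorem~\ref{thm:fv-lv}. First I would start from the identity $f_v(q,a,t) = \sum_{d=0}^{n} \langle \linkpoly_v, e_{n-d} h_d \rangle\, a^d$ of Theorem~\ref{thm:fv-lv} and substitute for $\linkpoly_v$ the finite expansion provided by Theorem~\ref{thm:link-finite}. Then I would evaluate the inner products with $e_{n-d} h_d$ by the standard content-word specialization, and finish with an elementary factorization over the coordinates of $\pi$.

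In more detail: as recalled in the proof of Theorem~\ref{thm:fv-lv} (following \cite{haglund-book}), for a symmetric function presented as a nonnegative combination of (unicellular) LLT-type polynomials $\sum_{\pi} t^{\dinv(\gamma,\pi)} x^{\pi}$ in the $x$-variables, pairing with $e_{n-d} h_d$ amounts to replacing each content word $\pi \in \mathbb{P}^n$ by a word with $n-d$ letters $\underline{0}$ and $d$ letters $1$, where in the dinv statistic $\underline{0}$ is taken to be less than itself and $1$ is not. By Theorem~\ref{thm:link-finite}, $\linkpoly_v$ is exactly such a combination: its summands are the barred-word polynomials $\sum_{\pi} t^{\dinv(\gamma,\pi)} x^{\pi}$, with the $q$- and $(1-q)$-prefactors depending only on $\gamma \in \fubini_v$. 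Applying the specialization termwise and collecting $\sum_d a^d$ yields
\begin{align}
f_v(q,a,t) &= \sum_{\gamma \in \fubini_v} q^{\area(\gamma)+\barstat(\gamma)} (1-q)^{-\barstat(\gamma)-\chi(v=0^n)} \sum_{\pi \in \{\underline{0},1\}^n} a^{\# \text{1's in }\pi}\, t^{\dinv(\gamma,\pi)} .
\end{align}

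It then remains to establish, for each fixed $\gamma \in \fubini_v$, the factorization
\begin{align}
\sum_{\pi \in \{\underline{0},1\}^n} a^{\# \text{1's in }\pi}\, t^{\dinv(\gamma,\pi)} &= \prod_{i=1}^{n} \left(a + t^{\dinv_i(\gamma)}\right) .
\end{align}
The key observation, exactly the projection argument from the proof of Theorem~\ref{thm:fv-lv}, is that $\dinv(\gamma,\pi) = \sum_{i\,:\,\pi_i = \underline{0}} \dinv_i(\gamma)$. Indeed, a dinv pair $(i,j)$ of the first type ($\gamma_i = \gamma_j$) contributes to $\dinv(\gamma,\pi)$ precisely when $\pi_j = \underline{0}$ (regardless of $\pi_i$, by the convention on $\underline{0}$), and it is one of the $\#\{j' < j : \gamma_{j'} = \gamma_j\}$ pairs counted by $\dinv_j(\gamma)$; a dinv pair $(i,j)$ of the second type ($\gamma_i + 1 = \gamma_j$ with $\gamma_j$ unbarred) contributes precisely when $\pi_i = \underline{0}$, and is one of the $\#\{j' > i : \gamma_{j'} = \gamma_i + 1,\ \gamma_{j'}\text{ unbarred}\}$ pairs counted by $\dinv_i(\gamma)$, the ``unbarred'' restriction matching verbatim the one built into $\dinv_i(\gamma)$. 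Building $\pi$ by choosing each $\pi_i \in \{\underline{0},1\}$ independently, coordinate $i$ then contributes a factor $a$ when $\pi_i = 1$ and $t^{\dinv_i(\gamma)}$ when $\pi_i = \underline{0}$, which gives the product. Substituting back yields the corollary.

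The two steps requiring care are (i) confirming that the content-word specialization used for Theorem~\ref{thm:fv-lv} applies equally to the barred-word summands of Theorem~\ref{thm:link-finite}, i.e.\ that those summands are symmetric and their modified dinv behaves like an LLT statistic under the $\underline{0}/1$ substitution; and (ii) the bookkeeping behind $\dinv(\gamma,\pi) = \sum_{i:\pi_i=\underline{0}} \dinv_i(\gamma)$, in particular the correct transport of the barring condition. Since the real content is already packaged in Theorems~\ref{thm:fv-lv} and~\ref{thm:link-finite}, neither step should be a serious obstacle; (i) is the one I would be most careful about.
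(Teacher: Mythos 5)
Your proposal is correct and matches the derivation the paper intends (the corollary is stated without proof, exactly as the analogous corollary in Section~\ref{sec:infinite} is): substitute the finite expansion of Theorem~\ref{thm:link-finite} into Theorem~\ref{thm:fv-lv}, apply the $\{\underline{0},1\}$ specialization with the convention $\underline{0}<\underline{0}$, and use $\dinv(\gamma,\pi)=\sum_{i\,:\,\pi_i=\underline{0}}\dinv_i(\gamma)$ to factor the sum over $\pi$ into $\prod_{i}\bigl(a+t^{\dinv_i(\gamma)}\bigr)$. Your bookkeeping of the two pair types and of the barring condition is right, so there is no gap.
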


\section{Conjectures}
\label{sec:conjectures}

So far, we have used the inner product $\langle \linkpoly_v, e_{n-d} h_d\rangle$ to compute $f_v(q,a,t)$; one might wonder if there is any value in studying the full symmetric function $L_v$. In this section, we conjecture that the link symmetric function $L_v$ is closely related to the combinatorics of Macdonald polynomials, hinting at a stronger connection between Macdonald polynomials and link homology. Following \cite{elias-hogancamp}, we must first define a ``normalized'' version of the link symmetric function $\linkpoly_{v}$.

\begin{defn}
\begin{align}
\linknorm_{v} = \linknorm_v(x;q,t) = (1-q)^{n-|v|} \linkpoly_{v}(x; q, t). 
\end{align}
\end{defn}

We could also define $\linknorm_{v}$ in terms of diagrams; each box that contains a number contributes an additional factor of $1-q$. Theorem \ref{thm:link-finite} implies that $\linknorm_{v}$ has coefficients in $\mathbb{Z}[q,t]$, whereas the coefficients of $\linkpoly_{v}$ are elements of $\mathbb{Z}[[q,t]]$. We conjecture that the normalized link symmetric function $\linknorm_{v}$ is closely connected to the Macdonald eigenoperators $\nabla$ and $\Delta$. 

The modified Macdonald polynomials $\widetilde{H}_{\mu}$ form a basis for the ring of symmetric functions with coefficients in $\mathbb{Q}(q,t)$. They can be defined via triangularity relations of combinatorially \cite{hhl, haglund-book}. Given a partition $\mu$, let $B_{\mu}$ be the alphabet of monomials $q^{i} t^j$ where $(i,j)$ ranges over the coordinates of the cells in the Ferrers diagram of $\mu$. We compute an example in Figure \ref{fig:bmu}.

\begin{figure}
\begin{displaymath}
\begin{ytableau}
t^2 \\
t	& qt	& q^2t  \\
1	&  q 	&  q^2	&  q^3
\end{ytableau}
\end{displaymath}
\caption{This is the Ferrers diagram of the partition $\mu = (4,3,1)$. In each cell we have written the monomial $q^i t^j$ that corresponds to the cell, yielding $B_{\mu} = \{1, q, q^2, q^3, t, qt, q^2 t, t^2 \}$.}
\label{fig:bmu}
\end{figure}

Given a symmetric function $F$ and a set of monomials $A = \{a_1, a_2, \ldots, a_n\}$, we let $F[A]$ be the result of setting $x_i = a_i$ for $1 \leq i \leq n$ and $x_i = 0$ for $i > n$. Then we define two operators on symmetric functions by setting, for $\mu \vdash n$,
\begin{align}
\Delta_F \widetilde{H}_{\mu} &= F \left[ B_{\mu} \right] \widetilde{H}_{\mu} \\
\nabla \widetilde{H}_{\mu} &= \Delta_{e_n} \widetilde{H}_{\mu}
\end{align}
and expanding linearly. Note that, for $\mu \vdash n$, $e_n[B_{\mu}]$ is simply the product of the $n$ monomials in $B_{\mu}$; we will sometime write $T_{\mu}$ for the product $e_n[B_{\mu}]$. 

\begin{conj}
\label{conj:p1^n}
\begin{align}
\label{p1^n1}
\nabla p_{1^n} &= \linknorm_{0^n} \\
\label{p1^n2}
\Delta_{e_{n-1}} p_{1^n} &= \sum_{\substack{ v \in \{0,1\}^n \\  |v| = 1}} \linknorm_{v}
\end{align}
In fact, both conjectures follow from the conjecture that
\begin{align}
\label{p1^n3}
\linknorm_{v0} &= \nabla p_1 \nabla^{-1} \linknorm_v.
\end{align}
\end{conj}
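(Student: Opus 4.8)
The plan is to recast \eqref{p1^n3} as an intertwining relation and prove that. Since $\nabla$ is invertible on symmetric functions over $\mathbb{Q}(q,t)$, \eqref{p1^n3} is equivalent to $\nabla^{-1}\linknorm_{v0} = p_1\cdot\nabla^{-1}\linknorm_v$; writing $M_v := \nabla^{-1}\linknorm_v$, the claim becomes simply $M_{v0} = p_1 M_v$ for every $v$. This reformulation already renders the stated consequences formal: from $\linknorm_0 = p_1$ (immediate, since $\linkpoly_0 = (1-q)^{-1}p_1$) and $\nabla p_1 = p_1$ (because $\widetilde{H}_{(1)} = p_1$ and $T_{(1)} = 1$) one gets $M_0 = p_1$, hence $M_{0^n} = p_1^{\,n}$, i.e.\ $\linknorm_{0^n} = \nabla p_{1^n}$, which is \eqref{p1^n1}; and \eqref{p1^n2} follows the same way once the single word with $|v|=1$ whose $1$ sits in the last coordinate is handled separately (a short computation, or a known Pieri-type rule for $\Delta$-operators). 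So the entire content of the conjecture is the relation $M_{v0} = p_1 M_v$.

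I would try to prove this relation inside the elliptic Hall algebra / $\nabla$--LLT calculus --- the same algebra that, via \cite{gorsky-negut}, governs torus link homology and that underlies the proof of the shuffle theorem \cite{carlsson-mellit}. Concretely: (a) use the paper's LLT expansion $\linkpoly_v = \sum_{\gamma}q^{\area(\gamma)}G_{\mathbf{\lambda}(\gamma)}(x;t)$, under which appending a $0$ to $v$ means ``adjoin one more single cell on an arbitrary positive diagonal $d$, weight it by $q^{\,d-1}$, sum over $d\ge 1$, and renormalize by $1-q$''; (b) recognize this column-adjunction operation as (the generating function of) an element of the positive half of the elliptic Hall algebra acting on $\Lambda$ --- the very operators out of which unicellular LLT polynomials, and after suitable specialization $\nabla$-images, are assembled; (c) invoke the fact that $\nabla$ implements the standard grading automorphism (the ``full twist'') of this algebra, under which the element effecting column-adjunction is conjugated precisely to multiplication by $p_1$. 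Steps (b) and (c) are where the real work lies: one must match the $\area$/$\dinv$ bookkeeping of the paper --- equivalently, the Elias--Hogancamp recurrence \eqref{eh-recurrence}, which the paper uses as the definition of $f_v$ and which already has the shape of a composition of such generators --- with the algebra's currents carefully enough that the automorphism can be applied, and then read off that the conjugate is indeed $p_1\cdot(-)$.

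A more hands-on alternative would be an induction on $n$ from the finite model of Theorem~\ref{thm:link-finite}: track how adjoining a new rightmost column turns a barred Fubini word associated with $v$ into one associated with $v0$, how $\area$, $\barstat$, $\dinv$ and the factors of $1-q$ transform, and match the resulting recursion against the $e_1$-Pieri rule $e_1\widetilde{H}_\mu = \sum_\nu c_{\mu\nu}\widetilde{H}_\nu$ for modified Macdonald polynomials, rescaled so that the $\nu$-term acquires the monomial $T_\nu/T_\mu$ (that is, $q^{i}t^{j}$ for the cell $(i,j)$ added in passing from $\mu$ to $\nu$ --- this rescaling being exactly what the outer $\nabla\,(\cdot)\,\nabla^{-1}$ produces). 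The obstacle --- and the reason I expect the algebraic step to be the crux of any approach --- is that $\nabla p_1\nabla^{-1}$ has no known elementary combinatorial action on $\Lambda$, so one is forced through the modified Macdonald basis; yet at present there is no $\widetilde{H}_\mu$-expansion of $\linkpoly_v$ for general $v$, and for $v = 0^n$ such an expansion is essentially \eqref{p1^n1} itself, so the naive ``compute the expansion, then verify the recursion'' is circular. That is why the elliptic Hall algebra route, which never needs an explicit expansion and uses only the automorphism property of $\nabla$, looks the most promising --- at the price of importing a substantial amount of machinery connecting link homology, LLT polynomials, and Macdonald operators.
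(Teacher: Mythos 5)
Your reduction of \eqref{p1^n1} to \eqref{p1^n3} (setting $M_v=\nabla^{-1}\linknorm_v$, checking $\linknorm_0=p_1$ and $\nabla p_1=p_1$, and iterating $M_{v0}=p_1M_v$) is exactly the paper's first step and is correct. The gap is your claim that \eqref{p1^n2} then ``follows the same way'' modulo a short computation. It does not follow formally. For $v=0^{k-1}10^{n-k}$, iterating \eqref{p1^n3} only yields $\linknorm_v=\nabla p_{1^{n-k}}\nabla^{-1}p_1\nabla p_{1^{k-1}}$, after the easy observation (which you do flag) that a trailing $1$ contributes no area or dinv, so $\linknorm_{0^{k-1}1}=p_1\nabla p_{1^{k-1}}$. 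The substantive content of the paper's proof is the identity $\sum_{k=1}^{n}\nabla p_{1^{n-k}}\nabla^{-1}p_1\nabla p_{1^{k-1}}=\Delta_{e_{n-1}}p_{1^n}$, which the paper proves by expanding $p_{1^{k-1}}$ in the $\widetilde{H}_\nu$ basis via iterated Macdonald--Pieri coefficients $d_\tau$ indexed by standard tableaux, and tracking eigenvalues: the sandwiched $\nabla^{-1}p_1\nabla$ strips exactly the factor $B_\mu(\tau,k)$ (the $q^it^j$ of the cell added at step $k$) from $T_\mu$, so the weight of a tableau becomes $d_\tau\prod_{i\neq k}B_\mu(\tau,i)$, and summing over $k$ produces $e_{n-1}[B_\mu]$, hence $\Delta_{e_{n-1}}p_{1^n}$. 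Your parenthetical ``a short computation, or a known Pieri-type rule for $\Delta$-operators'' is precisely this argument, and omitting it omits essentially all of the paper's proof of the implication.

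The remainder of your proposal, an elliptic Hall algebra proof of \eqref{p1^n3} itself, goes beyond anything the paper establishes (the paper leaves \eqref{p1^n3} as a conjecture and proves only the implication), but as written it is a program rather than a proof: your steps (b) and (c) --- identifying the $(1-q)$-normalized column-adjunction operator with an element of the algebra and asserting that conjugation by $\nabla$, viewed as the full-twist automorphism, carries it to multiplication by $p_1$ --- amount to a restatement of \eqref{p1^n3} in different language with no argument supplied, as you yourself acknowledge. The barred-Fubini induction you mention is likewise blocked, as you note, by the lack of an $\widetilde{H}_\mu$-expansion of $\linkpoly_v$ for general $v$. So the provable part of the statement (the implication) is missing its key Pieri computation in your write-up, and the conjectural part remains unproven in your proposal just as it does in the paper.
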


We should mention that Eugene Gorsky first noticed that the identity
\begin{align}
\sum_{a=0}^{d} \left\langle \nabla p_{1^n}, e_{n-d} h_d \right\rangle a^d &= (1-q)^n f_{0^n}(q,a,t)
\end{align}
seemed to hold and communicated this observation to the author via Jim Haglund. Gorsky's conjectured identity is a special case of Conjecture \ref{conj:p1^n}. It is also interesting to note that the operator in \eqref{p1^n3} appears in the setting of the Rational Shuffle Conjecture as $-\mathbf{Q}_{1,1}$ \cite{rational-shuffle}.

\begin{proof}
We prove that \eqref{p1^n3} implies \eqref{p1^n1} and \eqref{p1^n2}. The fact that \eqref{p1^n3} implies \eqref{p1^n1} is clear. For the second implication, consider $v \in \{0,1\}^n$ with $|v| =1$. Say $k$ is the unique position such that $v_k=1$. By \eqref{p1^n1}, $\linknorm_{0^{k-1}} = \nabla p_{1^{k-1}}$.  By definition, $\linknorm_{0^{k-1}1}$ considers $\gamma$ such that $\gamma_i = 0$ if and only if $i = k$. It follows that $\pi_k$ cannot be involved in any dinv pairs, and that $\gamma_k$ contributes no new area. Therefore
\begin{align}
\linknorm_{0^{k-1}1} &= p_1 \nabla p_{1^{k-1}}. 
\end{align}
Using \eqref{p1^n1} again, we get
\begin{align}
\label{nabla-p-expression}
\linknorm_{0^{k-1}10^{n-k}} &= \nabla p_{1^{n-k}} \nabla^{-1} p_1 \nabla p_{1^{k-1}}.
\end{align}

We define the \emph{Macdonald Pieri coefficients} $d_{\mu, \nu}$ by
\begin{align}
p_1 \widetilde{H}_{\nu} &= \sum_{\mu \leftarrow \nu} d_{\mu, \nu} \widetilde{H}_{\mu}.
\end{align}
where the sum is over partitions $\mu$ obtained by adding a single cell to $\nu$. Given a standard tableau $\tau$, let $\mu^{(i)}$ be the partition obtained by taking the cells containing $1,2,\ldots,i$ in $\tau$. Then each $\mu^{(i+1)}$ is obtained by adding a single cell to $\mu^{(i)}$. Let $d_{\tau}$ denote the product of the Macdonald Pieri coefficients
\begin{align}
d_{\tau} &= d_{\mu^{(1)}, \emptyset} d_{\mu^{(2)}, \mu^{(1)}} \ldots d_{\mu^{(n)}, \mu^{(n-1)}}.
\end{align}

Now we can express the right-hand side of \eqref{nabla-p-expression} as
\begin{align}
& \nabla p_{1^{n-k}} \nabla^{-1} p_1 \nabla \sum_{\nu \vdash k-1} \sum_{\tau \in \syt(\nu)} d_{\tau} \widetilde{H}_{\nu} \\
&= \nabla p_{1^{n-k}} \nabla^{-1} p_1\sum_{\nu \vdash k-1} \sum_{\tau \in \syt(\nu)} d_{\tau} T_{\nu} \widetilde{H}_{\nu} \\
&= \nabla p_{1^{n-k}} \sum_{\lambda \vdash k} \sum_{\tau \in \syt(\lambda)} d_{\tau} B_{\lambda}(\tau, n)^{-1} \widetilde{H}_{\lambda}
\end{align}
where by $B_{\lambda}(\tau, n)$ we mean the monomial $q^i t^j$ associated to the cell containing $n$ in $\tau$. Completing the computation, we get
\begin{align}
\sum_{\mu \vdash n} \widetilde{H}_{\mu} \sum_{\tau \in \syt(\mu)} d_{\tau} \prod_{i \neq k} B_{\mu}(\tau, i) .
\end{align}
Summing over all $k$, we obtain $\Delta_{e_{n-1}} p_{1^n}$.
\end{proof}

As an example of our conjecture, we can use Sage to compute
\begin{align}
\left\langle \nabla p_{1,1}, p_{1,1} \right\rangle &= 1 + q + t - qt.
\end{align}
This expression should equal $\left\langle \linknorm_{00}, p_{1,1}\right\rangle$ by Conjecture \ref{conj:p1^n}.  To compute this inner product using Theorem \ref{thm:link-finite}, we consider the barred Fubini words $01$ and $0\overline{1}$, each of which can receive labels $\pi = 12$ or $21$. The corresponding diagrams are

\vspace{10pt}

\begin{center}
\begin{tikzpicture}
\draw [step=0.5cm] (-0.001, 0) grid (0.5, 0);
\draw [step=0.5cm] (0.499, 0) grid (1, 0.5); 

\node at (0.25, -0.25) {1};
\node at (0.75, 0.25) {2};

\draw [step=0.5cm] (1.999, 0) grid (2.5, 0);
\draw [step=0.5cm] (2.499, 0) grid (3, 0.5); 

\node at (2.25, -0.25) {2};
\node at (2.75, 0.25) {1};

\draw [step=0.5cm] (3.999, 0) grid (4.5, 0);
\draw [step=0.5cm] (4.499, 0) grid (5, 0.5); 

\node at (4.25, -0.25) {1};
\node at (4.75, 0.25) {$\overline{2}$};

\draw [step=0.5cm] (5.999, 0) grid (6.5, 0);
\draw [step=0.5cm] (6.499, 0) grid (7, 0.5); 

\node at (6.25, -0.25) {2};
\node at (6.75, 0.25) {$\overline{1}$};
\end{tikzpicture}
\end{center}
where we have moved the bars from $\gamma_i$ to the corresponding $\pi_i$. The weights of these diagrams coming from Theorem \ref{thm:link-finite} are 
\begin{align}
\frac{t}{1-q} \qquad \quad \frac{1}{1-q} \qquad \quad  \frac{q}{(1-q)^2} \qquad \quad \frac{q}{(1-q)^2} 
\end{align}
respectively. After multiplying by the normalizing factor $(1-q)^2$ to go from $\linkpoly_{00}$ to $\linknorm_{00}$, we sum the resulting weights to get 
\begin{align}
(1-q)t + 1-q + q + q = 1 + q + t - qt
\end{align}
as desired.

After reading an earlier version of this paper, Fran\c{c}ois Bergeron contacted the author with the following additional conjectures.

\begin{conj}[Bergeron, 2016]
\label{conj:bergeron}
\begin{align}
\label{bergeron-append-0}
\linkpoly_{v0} &= \linkpoly_{1v} + q \linkpoly_{0v} \\
\label{bergeron2}
\linkpoly_{0^n} &= \sum_{v \in \{0,1\}^k} q^{n-|v|} \linkpoly_{v0^{n-k}} \\
t \left(\linkpoly_{u011v} - \linkpoly_{u101v} \right) &= \linkpoly_{u101v} - \linkpoly_{u110v} \\
\linknorm_{0^a 1^b 0^c} &= \nabla p_{1^c} \nabla^{-1} \widetilde{H}_{1^b} \nabla p_{1^a}  \\
\linkpoly_{1^a 0 1^b} &= \frac{t^a-1}{t^{a+b}-1} \left[ \nabla p_1 \nabla^{-1}, \widetilde{H}_{1^{a+b}} \right] + \widetilde{H}_{1^{a+b}} p_1 
\end{align}
where the  bracket represents the Lie bracket and operators are applied to 1 if nothing is explicitly specified. Bergeron also observed that $\linkpoly_v(x;q,1+t)$ is $e$-positive. (For more context on this last statement, see Section 4 of \cite{bergeron-open}.) 
\end{conj}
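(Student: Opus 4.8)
The plan is to split Conjecture~\ref{conj:bergeron} into four groups and treat each separately: the ``column-insertion'' identities \eqref{bergeron-append-0} and \eqref{bergeron2}, which are statements about the combinatorial definition of $\linkpoly_v$; the three-term $t$-linear identity, which is an LLT relation in disguise; the two Macdonald-operator identities, which I would prove only modulo Conjecture~\ref{conj:p1^n}; and the $e$-positivity of $\linkpoly_v(x;q,1+t)$, which I would reduce to a known $e$-positivity for unicellular LLT polynomials. The only genuinely difficult step is the operator identity for $\linkpoly_{1^a01^b}$.

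For \eqref{bergeron-append-0} I would give a weight-preserving bijection in the style of Lemma~\ref{lemma:first1}. Given $(\gamma,\pi)$ counted by $\linkpoly_{v0}$, so $\gamma_{n+1}>0$, move the last column to the front and decrement it by one: $\gamma\mapsto(\gamma_{n+1}-1,\gamma_1,\dots,\gamma_n)$ and $\pi\mapsto(\pi_{n+1},\pi_1,\dots,\pi_n)$. If $\gamma_{n+1}=1$ the new head is $0$ and the image is associated with $1v$, with the same $\area$; if $\gamma_{n+1}\ge2$ the new head is positive, the image is associated with $0v$, and the decrement deletes exactly one box, which is the explicit factor of $q$. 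The reverse map (increment the head, rotate it to the back) shows this is a bijection; it clearly preserves $x^\pi$, and the point to check is that it preserves $\dinv$: moving a column from last to first position flips the $\pi$-inequality in every $\dinv$ pair through that column, while decrementing the column interchanges the two $\dinv$ rules for exactly those pairs, so the two effects cancel. This is the same cancellation that powers the proof of Lemma~\ref{lemma:first1}. Identity \eqref{bergeron2} is then a telescoping consequence of \eqref{bergeron-append-0} and Lemma~\ref{lemma:first1} --- the case of an empty appended block is exactly the invariance of $\dinv$ under adding $(1,\dots,1)$ to $\gamma$ --- so I would deduce it by induction on the length of the appended block.

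For the three-term identity, rewrite it as $(1+t)\linkpoly_{u101v}=t\,\linkpoly_{u011v}+\linkpoly_{u110v}$. The words $u011v,u101v,u110v$ differ only in the position of the single positive column inside a length-$3$ window whose other two entries are $0$; matching terms of the three sums by keeping all column heights and all labels on their logical columns, the only changes to $\dinv$ come from that positive column's interaction with the two window-zeros, and this interaction vanishes unless the column has height $1$, in which case sliding it past a window-zero of smaller label raises $\dinv$ by exactly $1$. A short inclusion--exclusion over how many of the two window-zeros carry a label below that of the height-$1$ column produces the $(1+t)$-weighting. Equivalently this is one of the standard three-term relations for the unicellular LLT polynomials $G_{\lambda(\gamma)}$ appearing in $\linkpoly_v=\sum_\gamma q^{\area(\gamma)}G_{\lambda(\gamma)}(x;t)$, applied to a cell consecutive to a run of equal-diagonal cells; I would use whichever of these two framings is cleaner.

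Finally, assume \eqref{p1^n3}, i.e.\ $\linknorm_{v0}=\nabla p_1\nabla^{-1}\linknorm_v$. In a word of type $0^a1^b$ the $b$ trailing height-$0$ columns carry no $\area$ and, by the $\dinv$ rules, have no $\dinv$ with the positive columns to their left, so $\linkpoly_{0^a1^b}=\linkpoly_{0^a}\linkpoly_{1^b}$; matching normalizing exponents and using $\linkpoly_{1^b}=\widetilde{H}_{1^b}$ with \eqref{p1^n1} gives $\linknorm_{0^a1^b}=\widetilde{H}_{1^b}\nabla p_{1^a}$, and applying \eqref{p1^n3} $c$ times gives $\linknorm_{0^a1^b0^c}=\nabla p_{1^c}\nabla^{-1}\widetilde{H}_{1^b}\nabla p_{1^a}$, the fourth identity. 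The identity for $\linkpoly_{1^a01^b}$ (read, to match normalizations, with $\linknorm$ in place of $\linkpoly$) is the hard one: decomposing by the height $d$ of the lone positive column, the terms with $d\ge2$ sum --- via the factorization $G=e_1\cdot G_{\mathrm{rest}}$, valid when a cell lies at least two diagonals from all others --- to a geometric series in $q$ times $p_1\widetilde{H}_{1^{a+b}}$, while the $d=1$ term is a Hall--Littlewood/Macdonald-Pieri insertion coupling the new height-$1$ cell to the left block of $a$ zeros. Identifying this $d=1$ term with $\widetilde{H}_{1^{a+b}}p_1$ plus a commutator, and extracting the coefficient $(t^a-1)/(t^{a+b}-1)$ from the ratio of ``$a$ left zeros'' to ``$a+b$ total zeros'', is the main obstacle: it requires the explicit action of $\nabla p_1\nabla^{-1}$ and of multiplication by $\widetilde{H}_{1^{a+b}}$ on the $\widetilde{H}_{\mu}$-basis, precisely the Pieri calculus of the proof of Conjecture~\ref{conj:p1^n}, and it cannot be completed without that conjecture. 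For the $e$-positivity of $\linkpoly_v(x;q,1+t)$, substitute $t\mapsto1+t$ in $\linkpoly_v=\sum_\gamma q^{\area(\gamma)}G_{\lambda(\gamma)}(x;t)$; since each $q^{\area(\gamma)}$ is a monomial with nonnegative exponent, it suffices that $G_{\lambda(\gamma)}(x;1+t)$ be $e$-positive for the diagonal tuples $\lambda(\gamma)$ allowed by $\gamma_i=0\Leftrightarrow v_i=1$, a known property of unicellular LLT polynomials of this (``melting lollipop'') type, so I would invoke the results surveyed around \cite{bergeron-open} and verify that those diagonal sequences fall into that class.
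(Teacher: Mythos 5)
There is an important mismatch of expectations here: Conjecture \ref{conj:bergeron} is stated in the paper as an \emph{open} conjecture, so there is no proof in the paper to compare against -- the only argument the paper offers is the one-line remark that \eqref{bergeron-append-0} implies \eqref{bergeron2}. Judged as an attempt to actually prove the statement, your proposal is partly genuine progress and partly gap. The genuinely new and, as far as I can check, correct piece is the rotation bijection for \eqref{bergeron-append-0}: moving the last column to the front and decrementing it swaps the two dinv rules through that column exactly (old pairs with $\gamma_i+1=\gamma_{n+1}$, $\pi_i<\pi_{n+1}$ become equal-height pairs and vice versa), preserves $\dinv$ and $x^{\pi}$, and loses one unit of area precisely when $\gamma_{n+1}\ge 2$; this does yield $\linkpoly_{v0}=\linkpoly_{1v}+q\linkpoly_{0v}$, which the paper leaves conjectural. (One caveat on the telescoping step: iterating \eqref{bergeron-append-0} produces the weight $q^{k-|v|}$, not the printed $q^{n-|v|}$; the two agree only when $k=n$, and the printed form fails already for $n=2$, $k=1$, so what you deduce is a corrected version of \eqref{bergeron2}.) The sketch for the three-term relation is plausible -- it is a modular-law-type identity and checks out in small cases -- but it is only a sketch: you have not verified that your term-matching leaves the interactions of the two height-zero window columns with each other and with columns outside the window unchanged, nor carried out the inclusion--exclusion that is supposed to produce the $(1+t)$ weight.

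The remaining items are where the proposal does not close any gap. The identity for $\linknorm_{0^a1^b0^c}$ is derived only conditionally on \eqref{p1^n3}, which is Conjecture \ref{conj:p1^n} and is itself open, and you explicitly concede that the identity for $\linkpoly_{1^a01^b}$ cannot be completed without that conjecture; so both operator identities remain unproven, exactly as in the paper. Finally, the $e$-positivity claim is handled by appeal to a result that does not apply as you describe it: the tuples $\lambda(\gamma)$ with $\gamma_i=0\Leftrightarrow v_i=1$ produce essentially arbitrary unicellular LLT polynomials (arbitrary diagonal sequences, hence arbitrary unit-interval graphs), not the restricted ``melting lollipop'' family, and $e$-positivity of general unicellular LLT polynomials under $t\mapsto 1+t$ was itself only a conjecture at the time this paper was written (it was settled later by D'Adderio and by Alexandersson--Sulzgruber), so it cannot simply be invoked. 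In short: you have a correct new argument for \eqref{bergeron-append-0} (and, suitably corrected, \eqref{bergeron2}), a promising but incomplete sketch of the three-term relation, and no proof of the two Macdonald-operator identities or the $e$-positivity observation -- which is precisely why the statement stands as a conjecture in the paper.
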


It is clear that \eqref{bergeron-append-0} implies \eqref{bergeron2}. We do not know of any other relations between these conjectures. We close with two more open questions.
\begin{enumerate}
\item Is there a Macdonald eigenoperator expression for $\linknorm_{v}$ for other $v$?  Perhaps we can use ideas from the Rational Shuffle Conjecture \cite{rational-shuffle}, recently proved by Mellit \cite{mellit-rational}.

\item Can we generalize our conjecture for $\nabla p_{1^n}$ to ``interpolate'' between our conjecture and the Shuffle Theorem \cite{carlsson-mellit}, or maybe the Square Paths Theorem \cite{leven-square}?
\end{enumerate}

\section{Acknowledgements}

The author would like to Ben Elias and Matt Hogancamp for their exciting paper and for use of Figure \ref{fig:cv-braid}; Lyla Fadali for reading an earlier draft; Jim Haglund for editing and feedback; Eugene Gorsky for his comments and for the idea that Elias and Hogancamp's work could be related to Macdonald polynomials; and Fran\c{c}ois Bergeron for Conjecture \ref{conj:bergeron} along with other helpful suggestions.

\bibliographystyle{alpha}
\bibliography{statistics}

\begin{thebibliography}{BGHT99}

\bibitem[Ale23]{alexander}
J.~Alexander.
\newblock A lemma on a system of knotted curves.
\newblock {\em Proc. Nat. Acad. Sci. USA}, 9:93--95, 1923.

\bibitem[Ber16]{bergeron-open}
F.~Bergeron.
\newblock {Open Questions} for operators related to {Rectangular Catalan
  Combinatorics}.
\newblock arXiv:1603.04476, March 2016.

\bibitem[BGHT99]{bght-positivity}
F.~Bergeron, A.~M. Garsia, M.~Haiman, and G.~Tesler.
\newblock Identities and positivity conjectures for some remarkable operators
  in the theory of symmetric functions.
\newblock {\em Meths. and Appls. of Analysis}, 6(3):363--420, 1999.

\bibitem[BGLX15]{rational-shuffle}
F.~Bergeron, A.~Garsia, E.~S. Leven, and G.~Xin.
\newblock {Compositional} $(km, kn)$-{Shuffle} {Conjectures}.
\newblock {\em Int. Math. Research Notices}, October 2015.

\bibitem[CM15]{carlsson-mellit}
E.~Carlsson and A.~Mellit.
\newblock A proof of the shuffle conjecture.
\newblock arXiv:math/1508.06239, August 2015.

\bibitem[EH16]{elias-hogancamp}
B.~Elias and Matthew Hogancamp.
\newblock On the computation of torus link homology.
\newblock arXiv:1603.00407, March 2016.

\bibitem[GN15]{gorsky-negut}
E.~Gorsky and A.~Negut.
\newblock Refined knot invariants and {Hilbert} schemes.
\newblock {\em J. Math. Pures Appl.}, 9(104):403--435, 2015.

\bibitem[GORS14]{gors-knots}
E.~Gorsky, A.~Oblomkov, J.~Rasmussen, and V.~Shende.
\newblock Torus knots and the {Rational DAHA}.
\newblock {\em Duke Math. J.}, 163(14):2709--2794, 2014.

\bibitem[Hag08]{haglund-book}
J.~Haglund.
\newblock {\em The $q,t$-{Catalan} Numbers and the Space of Diagonal
  Harmonics}.
\newblock Amer. Math. Soc., 2008.
\newblock Vol. 41 of University Lecture Series.

\bibitem[Hag16]{haglund-knots}
J.~Haglund.
\newblock The combinatorics of knot invariants arising from the study of
  {Macdonald} polynomials.
\newblock In A.~Beveridge, J.~R. Griggs, L.~Hogben, G.~Musiker, and P.~Tetali,
  editors, {\em Recent Trends in Combinatorics}, pages 579 -- 600. The IMA
  Volumes in Math. and its Applications, 2016.

\bibitem[HHL05]{hhl}
J.~Haglund, M.~Haiman, and N.~Loehr.
\newblock A combinatorial formula for {Macdonald} polynomials.
\newblock {\em J. Amer. Math. Soc.}, 18:735--761, 2005.

\bibitem[Kho07]{khovanov}
Mikhail Khovanov.
\newblock Triply-graded link homology and {Hochschild} homology of soergel
  bimodules.
\newblock {\em Internat. J. Math.}, 18(8):869--885, 2007.

\bibitem[KR08]{khovanov-rozansky}
Mikhail Khovanov and Lev Rozansky.
\newblock Matrix factorizations and link homology.
\newblock {\em Fund. Math.}, 199(1):1--91, 2008.

\bibitem[LLT97]{llt}
A.~Lascoux, B.~Leclerc, and J.-Y. Thibon.
\newblock Ribbon tableaux, {Hall-Littlewood} functions, quantum affine
  algebras, and unipotent varieties.
\newblock {\em J. Math. Phys.}, 38(2):1041--1068, 1997.

\bibitem[{Mel}16]{mellit-rational}
A.~{Mellit}.
\newblock {Toric braids and $(m,n)$-parking functions}.
\newblock arXiv:1604.07456, April 2016.

\bibitem[Ren15]{ren}
Q.~Ren.
\newblock Ordered partitions and drawings of rooted plane trees.
\newblock {\em Discrete Math.}, 338:1--9, 2015.

\bibitem[{Ser}16]{leven-square}
E.~{Sergel Leven}.
\newblock {A proof of the Square Paths Conjecture}.
\newblock arXiv:1601.06249, January 2016.

\bibitem[Slo]{oeis}
N.~J.~A. Sloane.
\newblock {The On-Line Encyclopedia of Integer Sequences}.
\newblock Published electronically at \texttt{http://oeis.org}.

\bibitem[Sta99]{ec2}
R.~P. Stanley.
\newblock {\em Enumerative Combinatorics}, volume~2.
\newblock Cambridge University Press, 1999.

\end{thebibliography}
\label{sec:biblio}

\end{document}